 \newcommand{\NN}{\mathbb{N}}
  \newcommand{\cT}{{\sf{T}}}
 \newcommand{\cN}{{\sf{N}}}
  \newcommand{\cB}{{\sf{B}}}
\newcommand{\bM}{{\overline{M}}}
  \newcommand{\cG}{{\sf{G}}}
  \newcommand{\ck}{{\mathbf{k}}}
  \newcommand{\st}{{\textrm{such that\ }}}
 \newcommand{\kT}{\mathcal{T}}
\newcommand{\kN}{\mathcal{N}}
\newcommand{\kA}{\mathcal{A}}
\newcommand{\kM}{\mathcal{M}}
\newcommand{\kF}{\mathcal{F}}
\newcommand{\Nf}{\textrm{Nf}}
\def\Bbb#1{{\mathbb #1}}
\theoremstyle{plain} 
\newtheorem{theorem}{Theorem} 
\newtheorem{definition}[theorem]{Definition}
\newtheorem{proposition}[theorem]{Proposition}
\theoremstyle{remark} 
\newenvironment{example}
  {\pushQED{\qed}\examplex}
  {\popQED\endexamplex}
  \newtheorem{remark}[theorem]{Remark}
\title{Bar Code and Janet-like division}
\author{Michela Ceria\\
 Department of Computer Science - University of Milan - Via Celoria 18, 20133 Milano, Italy \\
               michela.ceria@gmail.com }
\date{ }
\begin{document}

\maketitle

\begin{abstract}
 Bar Codes are combinatorial objects encoding
 many properties of monomial ideals.
 \\
 In this paper we employ these objects to study Janet-like divisions. Given a finite 
 set of terms $U$, from its Bar Code we can 
 compute the Janet-like nonmultiplicative 
 power of its elements and detect completeness of the set. Some observation on the computation of Janet-like bases conclude the work.
\end{abstract}
{\bf Keywords:} Janet-like division, Bar Code, multiplicative variables
\section{Introduction}\label{IntroSec}
Bar Codes are combinatorial objects encoding many properties of monomial ideals.
\\
In \cite{Ce}, they have been employed to count zerodimensional (strongly) stable  monomial ideals in 2 and 3 variables
with affine Hilbert polynomial $p \in \NN$, setting a bijection between such ideals and some particular partition of the integer $p$ and then counting these partitions using determinantal formulas.\\
In \cite{CM}, instead, they are the main tool to compute the Groebner escalier of zerodimensional radical ideals given their variety, without passing through the (usually inefficient) Groebner bases' computation.
\\
In this paper, we show that Bar Codes can be successfully used as tools to study, describe and build Janet-like division, i.e. a
divisibility relation on terms, introduced by Gerdt and Blinkov \cite{GB4,GB5} to efficiently compute Groebner bases.\\
Janet-like division, though not being an involutive division \cite{GB1,GB2,GB3}, is strictly related to this concept, being 
a generalization of Janet division \cite{J1} and preserving most of its properties. As Janet division was based on 
the concept of multiplicative/nonmultiplicative variables of the elements of a finite set of terms (the leading terms of a generating set of an ideal, with respect to some term ordering), Janet-like division is based on the concept of nonmultiplicative power for the same terms.
In the case of Janet division, a term $t$ was reducible by a generating polynomial $f$ if and only if $t=\cT(f)w$, where 
$\cT(f)$ was the leading term of $f$ and $w$ a product of powers of multiplicative variables of $\cT(f)$. The case of Janet-like division is analogous, but $w$ should be non-divisible by any nonmultiplicative power of $\cT(f)$.\\
We see in this paper that thanks to Bar Codes it is possible to detect nonmultiplicative powers in a very simple way and to understand if the given generating set is complete, i.e., roughly speaking, if given any $t$ there exists a generator reducing it. If it does not happen, it is possible to update the generating set.\\
Note that the classical cases of Janet/Pommaret division can be easily treated analogously. Other applications of BarCode to Janet decomposition are discussed in \cite{Greedy}.
\\
More precisely, after setting the notation (Section \ref{NotatSec}) and 
giving a brief recap on Bar Codes (Section \ref{BCSec}), we   study Janet-like divisions by means of Bar Codes in Section \ref{JlikeSec} and we  relate Janet nonmultipicative powers to the concept of  {\em infinite corner} (Section \ref{CornerInfSec}).
In the last section, we give an overview on the potential future work on this topic.

\section{Notation}\label{NotatSec}
Throughout this paper we mainly follow the notation of \cite{SPES}. We denote by $\mathcal{P}:=\mathbf{k}[x_1,...,x_n]$ the ring of polynomials in
$n$ variables with coefficients in the field $\ck$. The \emph{semigroup of terms}, generated by the set $\{x_1,...,x_n\}$ is $$\mathcal{T}:=\{x^{\gamma}:=x_1^{\gamma_1}\cdots
x_n^{\gamma_n} \vert \,\gamma:=(\gamma_1,...,\gamma_n)\in \NN^n \}.$$  If $t=x_1^{\gamma_1}\cdots x_n^{\gamma_n}$, then $\deg(t)=\sum_{i=1}^n
\gamma_i$ is the \emph{degree} of $t$ and, for each $h\in \{1,...,n\}$, 
$\deg_h(t):=\gamma_h$ is the $h$-\emph{degree} of $t$.
 A \emph{semigroup ordering} $<$ on $\mathcal{T}$  is  a total ordering
\st $ t_1<t_2 \Rightarrow st_1<st_2,\, \forall s,t_1,t_2
\in \mathcal{T}.$ For each semigroup ordering $<$ on $\mathcal{T}$,  we can represent a polynomial
$f\in \mathcal{P}$ as a linear combination of terms arranged w.r.t. $<$, with
coefficients in the base field $\mathbf{k}$:
$$f\!=\!\sum_{t \in \mathcal{T}}c(f,t)t\!=\!\sum_{i=1}^s c(f,t_i)t_i:\,
c(f,t_i)\in
\mathbf{k}\setminus \{0\},\, t_i\in \mathcal{T}\!,\, t_1>\!...\!>t_s,$$ with
$\cT(f):=t_1$   the 
\emph{leading term} of $f$, $Lc(f):=c(f,t_1)$ the  \emph{leading
coefficient} 
of $f$ and $tail(f):=f-c(f,\cT(f))\cT(f)$  the 
\emph{tail} of $f$.
\\
A \emph{term ordering} is a semigroup ordering which is also a well ordering or, equivalently,  such that $1$ is lower than every variable.\\
In all paper, we consider the \emph{lexicographical ordering} 
induced
by  $x_1<...<x_n$, i.e:
$$ x_1^{\gamma_1}\cdots x_n^{\gamma_n}<_{Lex} x_1^{\delta_1}\cdots
x_n^{\delta_n} \Leftrightarrow \exists j\, \vert  \,
\gamma_j<\delta_j,\,\gamma_i=\delta_i,\, \forall i>j, $$
which is a term ordering. Since we do not consider any 
term ordering other than Lex, we drop the subscript and denote it by $<$ 
instead of $<_{Lex}$.\\
A subset $J \subseteq \kT$ is a \emph{semigroup ideal} if  $t \in J 
\Rightarrow st \in J,\, \forall s \in \mathcal{T}$; a subset ${\sf N}\subseteq \mathcal{T}$ is an \emph{order ideal} if
$t\in {\sf N} \Rightarrow s \in {\sf N}\, \forall s \vert t$.  
We have that ${\sf N}\subseteq \mathcal{T}$ is an order ideal if and only if 
$\mathcal{T}\setminus {\sf N}=J$ is a semigroup ideal.
\\
Given a semigroup ideal $J\subset\mathcal{T}$  we define ${\sf 
N}(J):=\mathcal{T}\setminus J$. The minimal set of generators ${\sf G}(J)$ of $J$ is called \emph{monomial basis} 
of $J$.
 
\noindent For all subsets $G \subset \mathcal{P}$,  $\cT\{G\}:=\{\cT(g),\, g \in  G\}$ and $\cT(G)$ is the semigroup ideal
of leading terms defined as $\cT(G):=\{t \cT(g),\, t \in \mathcal{T}, g \in G\}$. 
\\
Fixed a term order $<$, for any ideal  $I
\triangleleft \mathcal{P}$ the monomial basis of the semigroup ideal 
$\cT(I)=\cT\{I\}$ is called \emph{monomial basis}  of $I$ and denoted again by $\cG(I)$,
whereas the ideal 
$In(I):=(\cT(I))$ is called \emph{initial ideal} and the order ideal 
$\cN(I):=\kT \setminus \cT(I)$ is called \emph{Groebner escalier} of $I$.

\section{Recap on Bar Codes}\label{BCSec}
In this section, referring to \cite{Ce,BCStr}, we summarize the main definitions and properties about Bar Codes, which will be used in what follows. First of all, we recall the general definition of  Bar Code. 
\begin{definition}\label{BCdef1}
A Bar Code $\cB$ is a picture composed by segments, called \emph{bars}, 
superimposed in horizontal rows, which satisfies conditions $a.,b.$ below.
Denote by 
\begin{itemize}
 \item $\cB_j^{(i)}$ the $j$-th bar (from left to right) of the $i$-th row 
 (from top to bottom), $1\leq i \leq n$, i.e. the \emph{$j$-th $i$-bar};
 \item $\mu(i)$ the number of bars of the $i$-th row
 \item $l_1(\cB_j^{(1)}):=1$, $\forall j \in \{1,2,...,\mu(1)\}$ the $(1-)$\emph{length} of the $1$-bars;
 \item $l_i(\cB_j^{(k)})$, $2\leq k \leq n$, $1 \leq i \leq k-1$, $1\leq j \leq \mu(k)$ the $i$-\emph{length} of $\cB_j^{(k)}$, i.e. the number of $i$-bars lying over $\cB_j^{(k)}$
\end{itemize}
\begin{itemize}
 \item[a.] $\forall i,j$, $1 \leq i \leq n-1$, $1\leq j \leq \mu(i)$, $\exists ! \overline{j}\in \{1,...,\mu(i+1)\}$ s.t. $\cB_{\overline{j}}^{(i+1)}$ lies  under  $\cB_j^{(i)}$ 
 \item[b.] $\forall i_1,\,i_2 \in \{1,...,n\}$, $\sum_{j_1=1}^{\mu(i_1)} l_1(\cB_{j_1}^{(i_1)})= \sum_{j_2=1}^{\mu(i_2)} l_1(\cB_{j_2}^{(i_2)})$; we will then say that  \emph{all the rows have the 
same length}.
\end{itemize}
\end{definition}
\begin{example}\label{BC1}
 An example of Bar Code $\cB$ is
 
 \begin{center}
\begin{tikzpicture}
\node at (3.8,-0.5) [] {${\scriptscriptstyle 1}$};
\node at (3.8,-1) [] {${\scriptscriptstyle 2}$};
\node at (3.8,-1.5) [] {${\scriptscriptstyle 3}$};

\draw [thick] (4,-0.5) --(4.5,-0.5);
\draw [thick] (5,-0.5) --(5.5,-0.5);
\draw [thick] (6,-0.5) --(6.5,-0.5);
\draw [thick] (7,-0.5) --(7.5,-0.5);
\draw [thick] (8,-0.5) --(8.5,-0.5);
\draw [thick] (4,-1)--(5.5,-1);
\draw [thick] (6,-1) --(6.5,-1);
\draw [thick] (7,-1) --(7.5,-1);
\draw [thick] (8,-1) --(8.5,-1);
\draw [thick] (4,-1.5)--(5.5,-1.5);
\draw [thick] (6,-1.5) --(8.5,-1.5);
\end{tikzpicture}
\end{center}
 
 $\quad$ \\
The $1$-bars have length $1$.   As regards the other rows, $l_1(\cB_1^{(2)})=2$,
$l_1(\cB_2^{(2)})=l_1(\cB_3^{(2)})= l_1(\cB_4^{(2)})=1$,
$l_2(\cB_1^{(3)})=1$,$l_1(\cB_1^{(3)})=2$ and
 $l_2(\cB_2^{(3)})=l_1(\cB_2^{(3)})=3$, so
 $\sum_{j_1=1}^{\mu(1)} l_1(\cB_{j_1}^{(1)})= \sum_{j_2=1}^{\mu(2)}
l_1(\cB_{j_2}^{(2)})= \sum_{j_3=1}^{\mu(3)} l_1(\cB_{j_3}^{(3)})=5.$
 \end{example}

\noindent We outline now the construction of the Bar Code associated to a finite set 
of terms. For more details, see \cite{BCStr}, while for an alternative construction, see \cite{Ce}.\\
First of all, given a  term  $t=x_1^{\gamma_1}\cdots 
x_n^{\gamma_n} \in \mathcal{T}\subset \ck[x_1,...,x_n]$, for each $i \in \{1,...,n\}$, we take
$\pi^i(t):=x_i^{\gamma_i}\cdots x_n^{\gamma_n} \in \mathcal{T}.$ 
Taken a finite set of terms $M\subset \mathcal{T}$, for each $ i \in \{1,...,n\}$, we define 
$M^{[i]}:=\pi^i(M):=\{\pi^i(t) \vert t \in M\}.$ We take $M\subseteq \mathcal{T}$, with $\vert M\vert =m < \infty$ and we order its 
elements increasingly  w.r.t. Lex, getting the list  
$\bM=[t_1,...,t_m]$. Then, we construct the sets $M^{[i]}$, and 
the corresponding lexicographically ordered lists\footnote{$\bM$ cannot contain repeated terms, while the $\bM^{[i]}$, for $1<i \leq n$, can. In case some repeated terms occur in $\bM^{[i]}$, $1<i 
\leq n$, they clearly have to be adjacent in the list, due to the 
lexicographical ordering.} $\bM^{[i]}$, for $i=1,...,n$.  We can now define the $n\times m $ matrix of terms $\kM$   s.t. 
its $i$-th row is $\bM^{[i]}$, $i=1,...,n$, i.e.
\[\kM:= \left(\begin{array}{cccc}
\pi^{1}(t_1)&... & \pi^{1}(t_m)\\
\pi^{2}(t_1)&... & \pi^{2}(t_m)\\
\vdots & \quad &\vdots\\
\pi^{n}(t_1)& ... & \pi^{n}(t_m)
\end{array}\right)\]
\begin{definition}\label{BarCodeDiag}
 The \emph{Bar Code diagram} $\cB$ associated to $M$ (or, equivalently, to 
$\bM$) is a 
$n\times m $ diagram, made by segments s.t. the $i$-th row of $\cB$, $1\leq 
i\leq n$,  is constructed as follows:
       \begin{enumerate}
        \item take the $i$-th row of $\kM$, i.e. $\bM^{[i]}$
        \item consider all the sublists of repeated terms, i.e.\\ $[\pi^i(t_{j_1}),\pi^i(t_{j_1 +1}),
        ...,\pi^i(t_{j_1 
+h})]$ s.t. 
        $\pi^i(t_{j_1})= \pi^i(t_{j_1 
+1})=...=\pi^i(t_{j_1 +h})$, noticing that\footnote{Clearly if a term 
$\pi^i(t_{\overline{j}})$ is not 
        repeated in $\bM^{[i]}$, the sublist containing it will be only  
$[\pi_i(t_{\overline{j}})]$, i.e. $h=0$.} $0 \leq h<m$ 
        \item underline each sublist with a segment
        \item delete the terms of $\bM^{[i]}$, leaving only the segments (i.e. 
the \emph{$i$-bars}).
       \end{enumerate}
 We usually label each $1$-bar $\cB_j^{(1)}$, $j \in \{1,...,\mu(1)=m\},$ with the 
term $t_j \in \bM$.
\end{definition}

\noindent A Bar Code diagram is a 
Bar Code in the sense of definition \ref{BCdef1}.

\begin{example}\label{BarCodeNoOrdId}
Given  $M=\{x_1,x_1^2,x_2x_3,x_1x_2^2x_3,x_2^3x_3\}\subset
\mathbf{k}[x_1,x_2,x_3]$, we have the $3 \times 5 $ matrix $\kM$   and   the associated 
Bar Code displayed below:

\[\kM:= \left(\begin{array}{ccccc}
 x_1 & x_1^2 & x_2x_3 & x_1x_2^2x_3 & x_2^3x_3\\
 1 & 1 & x_2x_3 & x_2^2x_3 & x_2^3x_3\\
  1 & 1 & x_3 &  x_3 & x_3
 \end{array}\right)\]
  
\begin{center}
\begin{tikzpicture}
\node at (4.2,0) [] {${\small x_1}$};
\node at (5.2,0) [] {${\small x_1^2}$};
\node at (6.2,0) [] {${\small x_2x_3}$};
\node at (7.2,0) [] {${\small x_1x_2^2x_3}$};
\node at (8.2,0) [] {${\small x_2^3x_3}$};

 \node at (3.8,-0.5) [] {${\scriptscriptstyle 1}$};
\node at (3.8,-1) [] {${\scriptscriptstyle 2}$};
\node at (3.8,-1.5) [] {${\scriptscriptstyle 3}$};

\draw [thick] (4,-0.5) --(4.5,-0.5);
\draw [thick] (5,-0.5) --(5.5,-0.5);
\draw [thick] (6,-0.5) --(6.5,-0.5);
\draw [thick] (7,-0.5) --(7.5,-0.5);
\draw [thick] (8,-0.5) --(8.5,-0.5);
\draw [thick] (4,-1)--(5.5,-1);
\draw [thick] (6,-1) --(6.5,-1);
\draw [thick] (7,-1) --(7.5,-1);
\draw [thick] (8,-1) --(8.5,-1);
\draw [thick] (4,-1.5)--(5.5,-1.5);
\draw [thick] (6,-1.5) --(8.5,-1.5);
\end{tikzpicture}
\end{center}
\end{example} 

It is also possible to associate a finite set of terms $M_\cB$ to a given Bar 
Code $\cB$. In \cite{Ce} we first give a more general procedure to do so and then we specialize it in order 
 to have a \emph{unique} set of terms for each Bar Code.
 
If we apply such specialized procedure to a Bar Code obtained as above from an order ideal $M=\cN$, the unique set we get is exactly $\cN$.

 Here we give only the specialized version, so 
 we follow the steps below:
 \begin{itemize}
 \item[$\mathfrak{B}1$] consider the $n$-th row, composed by the bars 
$B^{(n)}_1,...,B^{(n)}_{\mu(n)}$. Let $l_1(B^{(n)}_j)=\ell^{(n)}_j$, 
for 
$j\in\{1,...,\mu(n)\}$. Label each bar 
$B^{(n)}_j$ with $\ell^{(n)}_j$ copies 
of $x_n^{j-1}$.
 \item[$\mathfrak{B}2$] For each $i=1,...,n-1$, $1 \leq j \leq \mu(n-i+1)$ 
 consider the bar $B^{(n-i+1)}_j$ and suppose that it has been 
 labelled by 
$\ell^{(n-i+1)}_j$ copies of a term $t$. Consider all the $(n-i)$-bars 
$B^{(n-i)}_{\overline{j}},...,B^{(n-i)}_{\overline{j}+h}$ 
  lying immediately  above  $ B^{(n-i+1)}_j$; note that $h$ satisfies 
$0\leq h\leq \mu(n-i)-\overline{j}$. 
 Denote the 1-lengths of 
$B^{(n-i)}_{\overline{j}},...,B^{(n-i)}_{\overline{j}+h}$  by  
$l_1(B^{(n-i)}_{\overline{j}})=\ell^{(n-i)}_{\overline{j}}$,...,
 $l_1(B^{(n-i)}_{\overline{j}+h})=\ell^{(n-i)}_{\overline{j}+h}$. 
 For each $0\leq k\leq h$, label  $ B^{(n-i)}_{\overline{j}+k}$ with 
$\ell^{(n-i)}_{\overline{j}+k}$ copies of $t x_{n-i}^{k}$. 
 \end{itemize}
\begin{definition}\label{Admiss}
A Bar Code $\cB$ is \emph{admissible} if the set $M$ obtained by applying 
$\mathfrak{B}1$ and $\mathfrak{B}2$ to $\cB$  is an order ideal.
\end{definition}

We give now the definition of \emph{block} in a Bar Code and of e-list associated to a $1$-bar, which give a connection between the bars and the terms obtained from the rules 
$\mathfrak{B}1$ and $\mathfrak{B}2$  (see Remark
\ref{ElistExp}).

\begin{definition}\label{Block}
 Given a Bar Code $\cB$, for each $1 \leq l \leq n$, $l\leq i\leq n$, $1\leq j \leq \mu(i)$, 
an $l$-\emph{block} associated to a bar $B_j^{(i)}$ of 
$\sf B$ is the set containing  $B_j^{(i)}$ itself and all the bars
of the 
$(l-1)$ 
rows lying immediately  above $B_j^{(i)}$.
\end{definition}

\begin{definition}\label{elist}
 Given a Bar Code $\cB$, 
 let us consider a $1$-bar $B_{j_1}^{(1)}$, with $j_1 
\in \{1,...,\mu(1)\}$.
 The \emph{e-list} associated to $B_{j_1}^{(1)}$ is the $n$-tuple 
$e(B_{j_1}^{(1)}):=(b_{j_1,n},....,b_{j_1,1})$, defined as follows:
 \begin{itemize}
  \item consider the $n$-bar  $B_{j_n}^{(n)}$, lying under 
  $B_{j_1}^{(1)}$. 
The number of $n$-bars on the left of $B_{j_n}^{(n)}$ is  $b_{j_1,n}.$
  \item for each $i=1,...,n-1$, let  $B_{j_{n-i+1}}^{(n-i+1)}$ and 
$B_{j_{n-i}}^{(n-i)}$ be 
  the $(n-i+1)$-bar and the $(n-i)$-bar 
lying under $B_{j_1}^{(1)}$. Consider the $(n-i+1)$-block associated to 
$B_{j_{n-i+1}}^{(n-i+1)}$, i.e. $B_{j_{n-i+1}}^{(n-i+1)}$ and all the bars lying over it. 
The number of $(n-i)$-bars of 
the block, which lie on  the 
left of $B_{j_{n-i}}^{(n-i)}$ is $b_{j_1,n-i}.$
    \end{itemize}
\end{definition}

\begin{remark}\label{ElistExp}
 Given a Bar Code $\cB$, fix a $1$-bar  $B_{j}^{(1)}$, with $j \in 
\{1,...,\mu(1)\}$.  Comparing definition \ref{elist} and the steps $\mathfrak{B}1$ and 
 $\mathfrak{B}2$ described above, we can observe that the values of the e-list 
$e(B_j^{(1)}):=(b_{j,n},....,b_{j,1})$ are exactly the
exponents of the term 
labelling $B_{j}^{(1)}$, obtained applying $\mathfrak{B}1$ and $\mathfrak{B}2$ to
$\cB$.
\end{remark}

\begin{example}\label{OrdIdElist}
For  the Bar Code $\cB$ 
 \begin{center}
\begin{tikzpicture}[scale=0.4]
\node at (-0.5,4) [] {${\scriptscriptstyle 0}$};
\node at (-0.5,0) [] {${\scriptscriptstyle 3}$};
\node at (-0.5,1.5) [] {${\scriptscriptstyle 2}$};
\node at (-0.5,3) [] {${\scriptscriptstyle 1}$};
 \draw [thick, color=blue] (0,0) -- (7.9,0);
 \draw [thick] (9,0) -- (10.9,0);
 
 \draw [thick] (0,1.5) -- (4.9,1.5);
 \draw [thick, color=blue] (6,1.5) -- (7.9,1.5);
 
 \draw [thick] (9,1.5) -- (10.9,1.5);
 
 \draw [thick] (0,3.0) -- (1.9,3.0);
 \draw [thick] (3,3.0) -- (4.9,3.0);
 
 \draw [thick, color=blue] (6,3.0) -- (7.9,3.0);
 
 \draw [thick] (9,3.0) -- (10.9,3.0);
 
 \node at (1,4.0) [] {\small $1$};
 \node at (4,4.0) [] {\small $x_1$};
 \node at (7,4.0) [] {\small $x_2$};
 \node at (10,4.0) [] {\small $x_3$};
\end{tikzpicture}
\end{center}
the e-list of  $B_{3}^{(1)}$ is
$e(B_{3}^{(1)}):=(0,1,0)$;  the bars involved in its computation as stated in Definition \ref{elist} are those highlighted in blue in the above picture.
\end{example}

\begin{proposition}[Admissibility criterion, 
\cite{Ce}]\label{AdmCrit}
 A Bar Code $\cB$ is admissible if and only if, for each 
 $1$-bar $\cB_{j}^{(1)}$, $j \in \{1,...,\mu(1)\}$, the e-list 
$e(\cB_j^{(1)})=(b_{j,n},....,b_{j,1})$ satisfies the following condition: 
$\forall k \in \{1,...,n\} \textrm{ s.t. } b_{j,k}>0,\, \exists \overline{j} 
 \in \{1,...,\mu(1)\}\setminus \{j\} \textrm{ s.t. } $ $$
e(\cB_{\overline{j}}^{(1)})= (b_{j,n},...,b_{j,k+1}, (b_{j,k})-1, 
b_{j,k-1},...,b_{j,1}). $$
\qed
\end{proposition}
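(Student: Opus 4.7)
The plan is to reduce the admissibility of $\cB$ to an elementary closure property of the labelled set $M = \{t_1, \ldots, t_{\mu(1)}\}$, and then read off the equivalence from Remark~\ref{ElistExp}. Recall that this remark identifies the entries of $e(\cB_j^{(1)}) = (b_{j,n}, \ldots, b_{j,1})$ with the exponents of the term $t_j$ produced by $\mathfrak{B}1$--$\mathfrak{B}2$, so that $t_j = x_1^{b_{j,1}} \cdots x_n^{b_{j,n}}$. Under this identification, everything on the left-hand side (admissibility) and the right-hand side (the e-list condition) is expressed purely in terms of the exponent vectors of the $t_j$.

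Next I would invoke the standard one-step criterion for order ideals: a finite subset $M \subseteq \kT$ is an order ideal if and only if for every $t \in M$ and every $k$ with $\deg_k(t) > 0$, the quotient $t/x_k$ still lies in $M$. The nontrivial direction is a straightforward induction on $\deg(t) - \deg(s)$, since every proper divisor of $t$ is reached from $t$ by a chain of single-variable divisions.

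With these two ingredients in hand, the proposition becomes essentially a translation. Indeed, by Remark~\ref{ElistExp}, the tuple $(b_{j,n}, \ldots, b_{j,k+1}, b_{j,k}-1, b_{j,k-1}, \ldots, b_{j,1})$ is exactly the exponent vector of $t_j/x_k$, so the existence of some $\bar j \neq j$ with this prescribed e-list is equivalent to $t_j/x_k$ appearing as another label in $M$. For the \emph{only if} direction, assuming $M$ is an order ideal the closure under division by variables immediately supplies the required $\bar j$ for every $(j,k)$ with $b_{j,k}>0$. For the \emph{if} direction, given $s \mid t_j$ I would pick any $k$ with $\deg_k(s) < \deg_k(t_j)$, apply the hypothesis to $(j,k)$ to obtain $t_{\bar j} = t_j/x_k \in M$ which still has $s$ as a divisor, and recurse on $\deg(t_j)-\deg(s)$; the base case $s=t_j$ is trivial.

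The only mild subtlety I anticipate is book-keeping the correspondence $\cB_j^{(1)} \leftrightarrow t_j$ so that ``exists $\bar j \neq j$'' translates cleanly into ``$t_j/x_k$ appears as another label'' rather than as an accidental repetition of $t_j$ itself; but this is already guaranteed by $\mathfrak{B}1$--$\mathfrak{B}2$, since decrementing a single nonzero coordinate produces a term strictly different from $t_j$. No further machinery is needed: the entire argument sits between Remark~\ref{ElistExp} and the one-step divisor characterization of order ideals.
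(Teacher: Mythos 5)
Your argument is correct: identifying the e-lists with exponent vectors via Remark~\ref{ElistExp}, reducing admissibility to one-step closure under division by a variable, and noting that distinctness of the labels makes ``$\overline{j}\neq j$'' automatic is exactly what is needed, and the induction on $\deg(t_j)-\deg(s)$ for the converse is sound. The paper itself states this proposition without proof (it is quoted from \cite{Ce}), so there is nothing to compare against line by line, but your translation argument is the natural one and matches the intended justification.
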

\noindent Consider the  sets
$\kA_n:=\{\cB \in \mathcal{B}_n \textrm{ s.t. } \cB  \textrm{ admissible}\} $ and 
$\kN_n:=\{\cN \subset \kT,\, \vert \cN\vert < \infty \textrm{ s.t. } \cN 
\textrm{ is an order ideal}\}.$ We can define the map 
$$\eta: \kA_n \rightarrow \kN_n; \;\;  \cB \mapsto \cN,$$
where $\cN$ is the order ideal obtained applying $\mathfrak{B}1$ and $\mathfrak{B}2$ to $\cB$,
and it can be easily proved that $\eta$ is a bijection.\\

\noindent Up to this point, we have discussed the link between Bar Codes and order ideals,
 i.e. we focused on the link between Bar Codes and Groebner escaliers of 
monomial ideals. We show now that, given an admissible Bar Code 
$\cB$ and the order ideal  $\cN =\eta(\cB)$
it is possible to deduce a very specific generating set 
for the monomial ideal $I$ s.t. $\cN(I)=\cN$.

    \begin{definition}\label{StarSet}
 The \emph{star set} of an order ideal $\cN$ and 
 of its associated Bar Code $\cB=\eta^{-1}(\cN)$ is a set $\kF_\cN$ constructed as 
follows:
 \begin{itemize}
  \item[a)] $\forall 1 \leq i\leq n$, let $t_i$ be a term 
  which labels a $1$-bar lying over $\cB^{(i)}_{\mu(i)}$, 
  then 
  $x_i\pi^i(t_i)\in \kF_\cN$;
  \item[b)] $\forall 1 \leq i\leq n-1$, 
  $\forall 1 \leq j \leq \mu(i)-1$ let 
  $\cB^{(i)}_j$ and $\cB^{(i)}_{j+1}$ be two 
  consecutive bars not lying over the 
same $(i+1)$-bar and let $t^{(i)}_j$ be a term
which labels a $1$-bar lying 
over   $\cB^{(i)}_j$, then 
  $x_i\pi^i(t^{(i)}_j)\in \kF_\cN$.
 \end{itemize}
\end{definition}
\noindent We usually represent $\kF_\cN$ within 
the associated Bar Code $\cB$, inserting
each $t \in \kF_\cN$ on the right of the bar from which 
it is deduced.
Reading the terms from left to right and from the top to
the bottom, $\kF_\cN$ 
is ordered w.r.t. Lex. 

\begin{example}\label{BCP}
For ${\sf
N}=\{1,x_1,x_2,x_3\}\subset
\mathbf{k}[x_1,x_2,x_3]$,  
we have $\kF_\cN=\{x_1^2,x_1x_2,x_2^2,x_1x_3,x_2x_3,x_3^2\}$; looking at 
Definition \ref{StarSet}, we can see that  the terms $x_1x_3,x_2x_3,x_3^2$ come 
from a), while the terms  
$x_1^2,x_1x_2,x_2^2$ come from b).
 
  \begin{center}
\begin{tikzpicture}[scale=0.4]
\node at (-0.5,4) [] {${\scriptscriptstyle 0}$};
\node at (-0.5,0) [] {${\scriptscriptstyle 3}$};
\node at (-0.5,1.5) [] {${\scriptscriptstyle 2}$};
\node at (-0.5,3) [] {${\scriptscriptstyle 1}$};
 \draw [thick] (0,0) -- (7.9,0);
 \draw [thick] (9,0) -- (10.9,0);
 \node at (11.5,0) [] {${\scriptscriptstyle
x_3^2}$};
 \draw [thick] (0,1.5) -- (4.9,1.5);
 \draw [thick] (6,1.5) -- (7.9,1.5);
 \node at (8.5,1.5) [] {${\scriptscriptstyle
x_2^2}$};
 \draw [thick] (9,1.5) -- (10.9,1.5);
 \node at (11.5,1.5) [] {${\scriptscriptstyle
x_2x_3}$};
 \draw [thick] (0,3.0) -- (1.9,3.0);
 \draw [thick] (3,3.0) -- (4.9,3.0);
 \node at (5.5,3.0) [] {${\scriptscriptstyle
x_1^2}$};
 \draw [thick] (6,3.0) -- (7.9,3.0);
 \node at (8.5,3.0) [] {${\scriptscriptstyle
x_1x_2}$};
 \draw [thick] (9,3.0) -- (10.9,3.0);
 \node at (11.5,3.0) [] {${\scriptscriptstyle
x_1x_3}$};
 \node at (1,4.0) [] {\small $1$};
 \node at (4,4.0) [] {\small $x_1$};
 \node at (7,4.0) [] {\small $x_2$};
 \node at (10,4.0) [] {\small $x_3$};
\end{tikzpicture}
\end{center}
\end{example}

In \cite{CMR}, given a monomial ideal $I$, the authors define
 the following set, calling it \emph{star set}:
$$\mathcal{F}(I)=\left\{x^{\gamma} \in \mathcal{T}\setminus {\sf N}(I) \,
\left\vert \,
\frac{x^{\gamma}}{\min(x^{\gamma})} \right. \in {\sf N}(I) \right\}.$$
\begin{proposition}[\cite{Ce}]\label{DefSt}
With the above notation $\mathcal{F}_{\sf N}=\mathcal{F}(I)$.
\end{proposition}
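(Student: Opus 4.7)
The plan is to prove the equality by double inclusion, exploiting the common structural form $\tau = x_i\,\pi^i(t)$ shared by every element of $\kF_\cN$ (for some label $t$ of a $1$-bar and some index $i\in\{1,\ldots,n\}$, in both rules (a) and (b) of Definition \ref{StarSet}). Since $\pi^i(t)\in\ck[x_i,\ldots,x_n]$, we immediately get $\min(\tau)=x_i$ and $\tau/\min(\tau)=\pi^i(t)$; and from $\pi^i(t)\mid t$ with $t\in\cN$, the order-ideal property of $\cN$ yields $\pi^i(t)\in\cN$. Thus for $\kF_\cN\subseteq\kF(I)$ only $\tau\notin\cN$ remains to be checked in each of the two construction cases.

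In case (a) this follows because $\cB^{(i)}_{\mu(i)}$ is the rightmost $i$-bar: were $\tau\in\cN$, its $i$-bar would have $\pi^i$-value $\tau >_{\mathrm{Lex}}\pi^i(t)$ and would therefore sit strictly right of $\cB^{(i)}_{\mu(i)}$. In case (b), the hypothesis that the consecutive $i$-bars $\cB^{(i)}_j,\cB^{(i)}_{j+1}$ do not share an $(i+1)$-bar translates into distinct $\pi^{i+1}$-values; combined with the Lex definition this forces $\pi^{i+1}(\cB^{(i)}_{j+1})>_{\mathrm{Lex}}\pi^{i+1}(\cB^{(i)}_j)$, and a quick Lex comparison then shows that $\tau$ sits strictly between the two $i$-bars in Lex, which would produce an impossible intermediate $i$-bar if $\tau$ belonged to $\cN$.

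For the reverse inclusion, I would start with $\tau=x^\gamma\in\kF(I)$, set $x_i=\min(\tau)$ and $\sigma=\tau/x_i\in\cN$, so that $\pi^i(\sigma)=\sigma$ and $\tau=x_i\,\pi^i(\sigma)$. Locating $\sigma$ as the label of a $1$-bar and letting $\cB^{(i)}_{j_0}$ be the $i$-bar below it, I would split on three subcases: (1) $j_0=\mu(i)$, in which rule (a) directly produces $\tau$; (2) $j_0<\mu(i)$ with $\cB^{(i)}_{j_0+1}$ over a different $(i+1)$-bar, in which rule (b) applies; (3) $j_0<\mu(i)$ with $\cB^{(i)}_{j_0+1}$ over the same $(i+1)$-bar, which must be excluded.

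Subcase (3) is the only delicate step and I expect it to be the main obstacle. It is ruled out by the auxiliary observation that, inside a common $(i+1)$-block, consecutive $i$-bars have consecutive $x_i$-exponents, a direct consequence of $\cN$ being an order ideal (division by $x_i$ fills in every lower exponent). Under this, $\pi^i(\cB^{(i)}_{j_0+1}) = x_i\sigma = \tau$; but then any $t'$ labelling a $1$-bar above $\cB^{(i)}_{j_0+1}$ is a multiple of $\tau$ lying in $\cN$, forcing $\tau\in\cN$ and contradicting $\tau\in\kF(I)$. A short side remark covers the boundary case $i=n$, where rule (b) is vacuous: the same $x_n$-exponent argument immediately shows that $j_0<\mu(n)$ would force $x_n^{\gamma_n}\in\cN$, so only case (1) can occur.
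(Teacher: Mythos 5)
The paper itself offers no proof of this proposition (it is imported from \cite{Ce} as a citation), so there is no in-text argument to compare against; your double-inclusion proof is correct and is the natural route. The two points that genuinely need work --- ruling out $\tau\in\cN$ in cases (a) and (b) of the forward inclusion via the Lex-ordering of the $i$-bars, and excluding your subcase (3) by observing that consecutive $i$-bars inside a common $(i+1)$-block carry consecutive $x_i$-exponents because $\cN$ is an order ideal --- are exactly the right ones, and both of your arguments for them go through.
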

The star set $\kF(I)$ of a monomial ideal $I$ is strongly connected to Janet's 
theory \cite{J1,J2,J3,J4} and to the notion of Pommaret basis \cite{Pom,PomAk,SeiB,SdeltaR}, as explicitly pointed out in \cite{CMR}.
\\In particular, for quasi-stable ideals, the star set is finite and coincides with their Pommaret basis.
 
\section{Bar Code and Janet-like divisions}\label{JlikeSec}
Janet division dates back to the 1920 paper by Janet \cite{J1} and it is first developed  to study partial differential equations via algebraic methods, following and formalizing the approach by Riquier \cite{Riq}.\\
This division is defined, for each set of terms 
$U\subset \mathcal{T}$, as a divisibility relation on terms. In particular, each $t \in U$ is equipped with
a set $M_J(t,U)$ of multiplicative variables, according to the following definition.
   \begin{definition}\cite[ppg.75-9]{J1}\label{multiplicative}
Let  $U\subset \mathcal{T}$ be a set of terms
 and  $t=x_1^{\alpha_1}\cdots x_n^{\alpha_n} $
be an element of $U$.
A variable $x_j$ is called \emph{multiplicative}
for $t$ with respect to $U$ if there is no term in
$U$ of the form
$t'=x_1^{\beta_1}\cdots x_j^{\beta_j}x_{j+1}^{\alpha_{j+1}} \cdots 
x_n^{\alpha_n}$
with $\beta_j>\alpha_j$.
 We denote by $M_J(t,U)$ the set of
multiplicative variables for $t$ with respect to $U$.\\
The variables that are not multiplicative for $t$ w.r.t. $U$ 
are called \emph{non-multiplicative} and we denote by $NM_J(t,U)$ the set 
containing them.
\end{definition}
The divisibility relation is defined as follows: for 
each $u \in \kT$, we say that a term $t\in U$ Janet-divides $u$ if $u=tv$ and each $x_j \mid v$, $j \in \{1,...,n\}$,  belongs 
to $M_J(t,U)$, i.e. $v$ is a product of powers of multiplicative variables for $t$.
In this case, $t$ is a Janet-divisor of $u$ and $u$ a 
Janet-multiple of $t$.
With the definition below, we group together all the 
Janet-multiples  of any term $t \in U$.
\begin{definition}\label{cone}
With the previous notation, the \emph{cone} of  
$t$ with respect to $U$   is the set
$$C_J(t, U):=\{t x_1^{\lambda_1} \cdots x_n^{\lambda_n} \,\vert
\, \textrm{where } \lambda_j\neq 0 \textrm{ only if } x_j \in M_J(t,U)
\}.$$
\end{definition}

It can be proved \cite{J1} that each $u \in \kT$ has at most a 
 Janet-divisor, i.e. the cones are disjoint. A priori, it may happen that a term $u \in \kT$ has no Janet-divisor; the notion of \emph{completeness} 
 characterizes the case in which this cannot happen.
\begin{definition}\label{COMPLETENESS}
  A set $U \subset \kT$ is  \emph{complete} if
  $\cT(U)=\bigcup_{t \in U} C_J(t, U).$  
\end{definition}
Janet division is employed to construct a special kind of Groebner basis for an ideal $I=(G)$ called \emph{Janet basis}. 
Roughly speaking, the complete set $U$ is the set $\cT\{G\}$ of all leading terms for the generators 
and any term $u \in \kT$ is reduced by means of 
the polynomial $f \in G$ such that $t:=\cT(f)\in U$
is the Janet-divisor of $u$. 
\smallskip
 Gerdt and Blinkov \cite{GB1,GB2,GB3} give a generalization of Janet division and Janet bases, 
 by defining \emph{involutive divisions} and 
  \emph{involutive bases} \cite{A,AFS}.\\
In   \cite{GB4,GB5} they introduce 
Janet-like division and Janet-like bases, with the aim to decrease the number of elements in the  basis.\\
We recall now the definitions of  \emph{non-multiplicative power} and of \emph{Janet-like divisor}
from \cite{GB4,GB5}.

\begin{definition}\label{JlikeDivision}
Let $U \subset \kT$ be a finite set of terms; for each $u \in U$, $1\leq i \leq n$  consider 
$h_i(u,U)=\max\{\deg_i(v):\, v \in U, \deg_j(v)=\deg_j(u),\, i+1\leq j \leq n\}-\deg_i(u) \in \NN.$
If $h_i(u,U)>0$, define
$k_i:=\min\{\deg_i(v)-\deg_i(u) :\,  \deg_j(v)=\deg_j(u),\, i+1\leq j \leq n, \deg_i(v)>\deg_i(u)\};$
then $x_i^{k_i}$ is called \emph{non-multipicative power} of $u \in U$.
We denote by $NMP(u,U)$ the set of nonmultiplicative powers for $u \in U$.
\end{definition}

\begin{definition}\label{JlikeDivisors}
Let $U \subset \kT$ be a finite set of terms and $u \in U$; the elements in the monoid ideal 

$$NM(u,U)=\{v \in \kT \vert\, \exists w \in NMP(u,U):\, w \mid v\} $$
are called Janet-like \emph{nonmultipliers} for $u$, whereas the elements in
$M(u,U)=\kT\setminus NM(u,U)$ are called Janet-like \emph{multipliers} for $u$.
\\
A term $u \in U$ is a \emph{Janet-like divisor} of $w \in \kT$ if $w=uv$ with $v \in M(u,U)$.
\end{definition}
\begin{example}
Let us consider the set $U=\{x_1^5,x_2x_1^2,x_2^4x_1,x_3^2x_1^2,$ $ x_3^2x_2^2x_1,x_3^5\}\subset \kT$
of \cite{GB3} and suppose $x_1<x_2<x_3$.
The nonmultiplicative powers are summarized in the following table:
\begin{center}\begin{table}[h]\caption{Nonmultiplicative powers for the terms in $U$.}\label{Table1}
 \begin{tabular}{|c|c|}
\hline
$t$ & $NMP(t,U)$\\
\hline
$x_1^5$& $x_2,x_3^2$\\
\hline
$x_2x_1^2$& $x_2^3,x_3^2$\\
\hline
$x_2^4x_1$ &$x_3^2$\\
\hline
$x_3^2x_1^2$ & $x_2^2,x_3^3$\\
\hline
$x_3^2x_2^2x_1$ & $x_3^3$\\
\hline
$x_3^5$ & $\emptyset$\\
\hline
\end{tabular}  \end{table}
\end{center}
\end{example}
\vspace{-1cm}
We remark that, though Janet-like divisions preserves many properties of Janet division, it is \emph{not an involutive division}.
\\
\smallskip

In what follows, we see that a Bar Code can be used as a tool for studying Janet and Janet-like division.
The construction of a Bar Code can help to assign to each element $t$ of a finite set of terms
$U\subset \kT$ its multiplicative variables, according to Janet's Definition \ref{multiplicative}.\\
Let $U \subset \kT \subset \ck[x_1,...,x_n] $ be a finite set of terms and suppose
$x_1<x_2<...<x_n$.
As explained in section \ref{BCSec}, we can associate a Bar Code $\cB$ to it. Once $\cB$
is constructed, even if  $\cB$  may be a non-admissible Bar Code, we can mimick on it the set up we generally perform to construct the star set\footnote{We put a star symbol $*$ in the diagram in the places where in the star set construction we would have placed the terms.}. In particular:
\begin{itemize}
  \item[a)] $\forall 1 \leq i\leq n$,  place a star symbol $*$ on the right\footnote{The stars we are placing now are in the position corresponding to the terms we find in a) of Definition \ref{StarSet}.} of
  $\cB^{(i)}_{\mu(i)}$;
  \item[b)] $\forall 1 \leq i\leq n-1$, 
  $\forall 1 \leq j \leq \mu(i)-1$ let 
  $\cB^{(i)}_j$ and $\cB^{(i)}_{j+1}$ be two 
  consecutive bars not lying over the 
same $(i+1)$-bar; place a star symbol $*$ between them\footnote{the stars we are placing now are in the position corresponding to the terms we find in b) of Definition \ref{StarSet}.}.
 \end{itemize}
 Now, we state the following proposition, which connects the stars placed above 
 with Janet multiplicative variables.
\begin{proposition}\cite[Prop. 19]{BCVJT}\label{VMolt}
Let $U \subseteq \kT$ be a finite set of terms and let us denote by $\cB_U$ its Bar Code. For each $t \in U$,
$x_i$, $1 \leq i \leq n$, is multiplicative for $t$ if and only if,
in  $\cB_U$, the $i$-bar $\cB^{(i)}_j$, over which $t$ lies, is followed by a star.
\end{proposition}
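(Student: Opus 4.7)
The plan is to unpack both ``multiplicativity of $x_i$ for $t$'' and ``$\cB^{(i)}_j$ is followed by a star'' as statements about the lex-ordered projections $\pi^{i}(t')$ and $\pi^{i+1}(t')$ ranging over $t' \in U$, and then to observe that the two say the same thing.

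First I would record that, by the very construction of the matrix $\kM$ and Definition \ref{BarCodeDiag}, the $i$-bar $\cB^{(i)}_j$ under $t$ groups exactly the terms $t'' \in U$ with $\pi^{i}(t'') = \pi^{i}(t)$, and two consecutive $i$-bars $\cB^{(i)}_j, \cB^{(i)}_{j+1}$ lie over the same $(i+1)$-bar if and only if their terms share the same $\pi^{i+1}$-value. Then I would rewrite Definition \ref{multiplicative} as follows: $x_i$ is \emph{non-multiplicative} for $t$ iff there exists $t' \in U$ with $\pi^{i+1}(t') = \pi^{i+1}(t)$ and $\deg_i(t') > \deg_i(t)$. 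Since we use the lex order with $x_1 < \cdots < x_n$, the equality $\pi^{i+1}(t') = \pi^{i+1}(t)$ reduces the comparison of the $\pi^{i}$'s to their $x_i$-exponents, so such a $t'$ must satisfy $\pi^{i}(t') > \pi^{i}(t)$, hence sit in some later $i$-bar $\cB^{(i)}_{j'}$ with $j' > j$ (with $i+1$ interpreted as the empty tail when $i=n$).

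For the equivalence, suppose $\cB^{(i)}_j$ is followed by a star. Either $j = \mu(i)$, in which case no $i$-bar lies to its right and no witness $t'$ can exist; or the star comes from rule (b), meaning $\cB^{(i)}_{j+1}$ sits over a different $(i+1)$-bar. Because lex sorting of $\bM^{[i]}$ keeps all terms with the same $\pi^{i+1}$ contiguous, every $\cB^{(i)}_{j'}$ with $j' > j$ then lies over a strictly later $(i+1)$-bar and hence has a different $\pi^{i+1}$-value from $t$, so no witness can exist and $x_i$ is multiplicative. Conversely, if no star is placed right after $\cB^{(i)}_j$, then $\cB^{(i)}_{j+1}$ still lies over the same $(i+1)$-bar as $\cB^{(i)}_j$; any term $t'$ attached to a $1$-bar above $\cB^{(i)}_{j+1}$ then satisfies $\pi^{i+1}(t') = \pi^{i+1}(t)$ and $\deg_i(t') > \deg_i(t)$, witnessing that $x_i$ is non-multiplicative.

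The step that demands the most care, although it is not technically hard, is the contiguity claim: once a star between $\cB^{(i)}_j$ and $\cB^{(i)}_{j+1}$ records a change of $(i+1)$-bar, no later $i$-bar can return to the previous $(i+1)$-bar. This is exactly what lex sorting of $\bM^{[i]}$ (with $x_i$ the smallest variable appearing in $\pi^{i}$) guarantees, and it is precisely this fact that allows rules (a) and (b) of the star placement to be merged into the single clean equivalence with Janet's multiplicativity condition.
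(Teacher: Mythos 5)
Your proof is correct. Note, however, that the paper itself offers no proof of this statement: it is imported verbatim as Proposition 19 of \cite{BCVJT}, so there is no internal argument to compare yours against. What you supply is the natural self-contained argument, and all the load-bearing steps are in place: the identification of the $i$-bar under $t$ with the set of $t''\in U$ having $\pi^{i}(t'')=\pi^{i}(t)$ (which relies on the weak monotonicity of $\pi^{i}$ with respect to Lex, so that equal $\pi^{i}$-values are contiguous in $\bM^{[i]}$), the reformulation of Janet non-multiplicativity as the existence of $t'$ with $\pi^{i+1}(t')=\pi^{i+1}(t)$ and $\deg_i(t')>\deg_i(t)$, and the contiguity of $(i+1)$-bars that makes the rule~(b) stars detect exactly the boundary of the block $\{t'' : \pi^{i+1}(t'')=\pi^{i+1}(t)\}$. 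You also correctly isolate the two ways a star can follow $\cB^{(i)}_j$ (rule~(a) when $j=\mu(i)$, rule~(b) otherwise) and handle the $i=n$ case, where only rule~(a) applies and $\pi^{n+1}$ degenerates to the empty tail. The one sentence I would tighten is the converse direction: when you pick $t'$ over $\cB^{(i)}_{j+1}$, say explicitly that $\deg_i(t')>\deg_i(t)$ follows because the bars are ordered increasingly in Lex and $\pi^{i+1}(t')=\pi^{i+1}(t)$ forces the Lex comparison of $\pi^{i}(t')$ and $\pi^{i}(t)$ to be decided by the $x_i$-exponent; you state the conclusion but the reason is only implicit.
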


Now, we start focusing on how to study Janet-like division using Bar Codes. \\
As remarked in \cite{GB4}, \emph{every nonmultiplicative power is nothing else then the power of Janet-nonmultiplicative variable.}
\\
Indeed, consider $u \in U$ and $1\leq i \leq n$. Since $u \in \{ v \in U:\, \deg_j(v)=\deg_j(u),\, i+1\leq j \leq n\}$ we can immediately desume that $h_i(u,U)\geq 0$. In the case $h_i(u,U)=0$, there is no term $ v$ with $\deg_j(v)=\deg_j(u),\, i+1\leq j \leq n$ s.t. $ \deg_i(v)>\deg_i(u)$, so, by Definition \ref{multiplicative},
$x_i$ is Janet-multiplicative for $u$. Otherwise, i.e. if 
$h_i(u,U)>0$, then there is a term $ v$ with $\deg_j(v)=\deg_j(u),\, i+1\leq j \leq n$ s.t. $ \deg_i(v)>\deg_i(u)$, so, again 
by Definition \ref{multiplicative},
$x_i$ is Janet-nonmultiplicative for $u$. This reflects on the Bar Code associated to $U$, since trivially the absence of stars after some bar is equivalent to the presence of a non-multiplicative power of the corresponding variable for the terms over that bar.
Moreover, Janet divisibility implies Janet-like divisibility, whereas the viceversa does not hold (see \cite{GB4} for a proof of this fact).
\\

\smallskip

We prove now the analogous of Proposition \ref{VMolt} for Janet-like division.
\begin{proposition}\label{VJLMolt}
Let $U \subseteq \kT$ be a finite set of terms and let us denote by $\cB_U$ its Bar Code. Let $t \in U$, $x_i \in NM_J(t,U)$ a Janet-nonmultiplicative variable, $\cB^{(i)}_l$ the $i$-bar under $t$ and
$t'$ any term over $\cB^{(i)}_{l+1}$. Then 
$$k_i=\deg_i(t')-\deg_i(t).$$
\end{proposition}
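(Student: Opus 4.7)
The plan is to identify a specific element $v_{\min}\in U$ realizing the minimum in the definition of $k_i$ and to show that $\pi^i(v_{\min})$ is precisely the lex-successor of $\pi^i(t)$ among the values $\{\pi^i(u):u\in U\}$, which is exactly what pins it to the bar $\cB^{(i)}_{l+1}$. Since $x_i\in NM_J(t,U)$, the set
\[
S:=\{v\in U:\deg_j(v)=\deg_j(t)\text{ for }i+1\le j\le n,\ \deg_i(v)>\deg_i(t)\}
\]
is non-empty by Definition \ref{multiplicative}. I fix $v_{\min}\in S$ minimizing $\deg_i(v)-\deg_i(t)$, so that this difference equals $k_i$.

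Next, I would translate the claim into lex comparisons of $\pi^i$-values. Because $v_{\min}$ and $t$ agree in every coordinate strictly above $i$ and $\deg_i(v_{\min})>\deg_i(t)$, we immediately have $\pi^i(t)<\pi^i(v_{\min})$. Since, by construction, the $i$-bars of $\cB_U$ correspond from left to right to the distinct values of $\pi^i$ on $U$ in increasing lex order, it suffices to show that no $u\in U$ satisfies $\pi^i(t)<\pi^i(u)<\pi^i(v_{\min})$: once this is done, every term $t'$ over $\cB^{(i)}_{l+1}$ has $\pi^i(t')=\pi^i(v_{\min})$, and hence $\deg_i(t')-\deg_i(t)=k_i$.

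The non-intermediacy claim I would prove by case analysis on whether $\pi^{i+1}(u)=\pi^{i+1}(t)$. If yes, then $\pi^i(u)$ and $\pi^i(t)$ can differ only in position $i$, so $\pi^i(t)<\pi^i(u)$ forces $\deg_i(u)>\deg_i(t)$, meaning $u\in S$; minimality of $k_i$ then gives $\deg_i(u)\ge\deg_i(v_{\min})$, contradicting $\pi^i(u)<\pi^i(v_{\min})$. If no, let $j^*>i$ be the largest index with $\deg_{j^*}(u)\neq\deg_{j^*}(t)$; the assumption $\pi^i(t)<\pi^i(u)$ forces $\deg_{j^*}(u)>\deg_{j^*}(t)$. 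But $v_{\min}$ agrees with $t$ on every coordinate above $i$, so at position $j^*$ we have $\deg_{j^*}(v_{\min})=\deg_{j^*}(t)<\deg_{j^*}(u)$, while $u$ and $v_{\min}$ agree in every coordinate strictly above $j^*$ (both equal the corresponding coordinate of $t$); applying the definition of lex gives $\pi^i(u)>\pi^i(v_{\min})$, again a contradiction.

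The main obstacle will be the second case of the non-intermediacy argument: one has to be careful that the lex comparison between $\pi^i(u)$ and $\pi^i(v_{\min})$ is governed not by coordinate $i$ but by the top mismatch $j^*>i$ between $u$ and $t$, and then track this through the definition of the lex order. The rest is bookkeeping on how the $\pi^i$-projection reads off from the $i$-th row of the Bar Code.
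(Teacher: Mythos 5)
Your argument is correct. You establish the non-emptiness of the minimizing set from Definition \ref{multiplicative}, pick a minimizer $v_{\min}$ with $\deg_i(v_{\min})-\deg_i(t)=k_i$, and then show by a lex case analysis that no $u\in U$ has $\pi^i(u)$ strictly between $\pi^i(t)$ and $\pi^i(v_{\min})$, so that $v_{\min}$ necessarily sits over $\cB^{(i)}_{l+1}$; both cases of that analysis check out, including the delicate second case where the comparison of $\pi^i(u)$ with $\pi^i(v_{\min})$ is decided at the top mismatch $j^*>i$ rather than at position $i$.

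The paper argues in the converse direction and more briefly: it starts from the bar $\cB^{(i)}_{l+1}$, invokes Proposition \ref{VMolt} to say that the absence of a star after $\cB^{(i)}_l$ forces $\cB^{(i)}_{l+1}$ to lie over the same $(i+1)$-bar as $\cB^{(i)}_l$, hence every term over $\cB^{(i)}_{l+1}$ agrees with $t$ in all degrees above $i$ and has strictly larger $i$-degree, and then dismisses minimality with the phrase ``due to the Lex ordering of the terms in the Bar Code.'' Your proof never touches the star construction or Proposition \ref{VMolt}; instead it supplies exactly the lex bookkeeping that the paper compresses into that last phrase, by proving directly that $\pi^i(v_{\min})$ is the lex-successor of $\pi^i(t)$ among the $\pi^i$-values of $U$. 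The trade-off: the paper's route is shorter because it reuses the star machinery already set up for Janet division, while yours is more self-contained and makes the minimality step (arguably the real content of the proposition) fully explicit. Both are valid; if you write yours up, state once, with a one-line justification, the structural fact you rely on — that the $i$-bars enumerate the distinct values of $\pi^i$ on $U$ in increasing lex order — since that is the bridge between your successor argument and the conclusion about $\cB^{(i)}_{l+1}$.
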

\begin{proof}
We first remark that since  $x_i \in NM_J(t,U)$, by  Proposition \ref{VMolt}, $\cB^{(i)}_l$  is not followed by a star, so again we will find nonmultiplicative powers only when there are no stars.
\\
Now since  $x_i \in NM_J(t,U)$, there is a term $v\in U$ such that 
$\deg_j(v)=\deg_j(t)$, $i+1\leq j\leq n$
 and   $\deg_i(v)>\deg_i(t)$.
In order to find the value $k_i$, we should find the minimal exponent of a term with the same $j$-degree as $t$, $i+1\leq j\leq n$, and bigger $i$-degree.\\
All terms over $\cB^{(i)}_l$ have the same  $\iota-$degree as $t$, $i \leq \iota \leq n$; considering   $\cB^{(i)}_{l+1}$ we have terms which have the same    $\iota-$degree as $t$, $i+1 \leq \iota \leq n$ (if $\cB^{(i)}_{l+1}$ would not be over the same $(i+1)$-bar as $\cB^{(i)}_l$ we would have a star after $\cB^{(i)}_l$). Moreover, their $i$-degree is bigger than $\deg_i(t)$ and it is the minimum with this property due to the Lex ordering of the terms in the Bar Code.
\end{proof}
The concept of \emph{completeness} w.r.t. Janet-like division is analogous to that defined for Janet division in Definition \ref{COMPLETENESS}.
\begin{definition}\label{JLcomplete}
A set $U \subset \kT$ is called \emph{complete} w.r.t. Janet-like division if for the sets
$$C_J(U):=\{uv:\, u \in U, v \in M(u,U)\}$$
and 
$$C(U):=\{uv:\, u \in U, v \in \kT\}$$
holds $C(U)=C_J(U).$
\end{definition}

\begin{proposition}\label{JLContinue}
  A set $U \subset \kT$ is  \emph{complete} w.r.t. Janet-like division if 
  and only if and only if 
  $$\forall u \in U,\, \forall p \in NMP(u,U),\, \exists v \in U :\, v\mid up \textrm{ w.r.t. Janet-like division.} $$
\end{proposition}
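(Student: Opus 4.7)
The plan is to handle the two directions separately: the forward direction is immediate, while the backward direction proceeds by induction on degree, with termination of an iteration step secured by a strict Lex-monotonicity lemma for Janet-like divisors of prolongations.

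For the forward direction, assume $U$ is complete, so $C(U)=C_J(U)$. For any $u\in U$ and any $p\in NMP(u,U)$, the term $up$ lies in $C(U)=C_J(U)$; by Definition \ref{JLcomplete} this means $up=vr$ for some $v\in U$ and $r\in M(v,U)$, i.e.\ $v$ Janet-like divides $up$.

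For the backward direction I want $C(U)\subseteq C_J(U)$ (the reverse inclusion is trivial), and I would argue by strong induction on $\deg(w)$ for $w\in C(U)$. Given $w=ut$ with $u\in U$: if $t\in M(u,U)$ we are done. Otherwise $t=ps$ for some $p\in NMP(u,U)$ dividing $t$. When $s=1$ the hypothesis applied to $w=up$ gives a Janet-like divisor directly; when $s\neq 1$, the prolongation $up$ has strictly smaller degree than $w$, so by the inductive hypothesis $up=vm$ with $v\in U$ and $m\in M(v,U)$, whence $w=vms$ and $v\mid w$. One then iterates with the pair $(v,ms)$: either $ms\in M(v,U)$ and we are done, or $ms=qr$ with $q\in NMP(v,U)$, in which case $r=1$ is handled by the hypothesis applied to $vq=w$, and $r\neq 1$ by the inductive hypothesis on $vq$, yielding a new divisor $v'\in U$ of $w$ and a fresh pair to feed into the iteration.

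The main obstacle is termination of this iteration, and the key lemma I would prove is the following: \emph{if $v\in U$ Janet-like divides $up$ for some $u\in U$ and $p=x_i^{k_i}\in NMP(u,U)$, then $v>u$ in the lexicographic order induced by $x_1<\cdots<x_n$.} One rules out $v=u$ immediately, since $v=u$ would give $m=p\in NM(u,U)$, contradicting $m\in M(u,U)$. Otherwise, at the top-most position $\ell$ where the exponent vectors of $u$ and $v$ differ, the assumption $\deg_\ell(v)<\deg_\ell(u)$ leads to a contradiction: since $\deg_{\ell'}(v)=\deg_{\ell'}(u)$ for all $\ell'>\ell$, the element $u$ witnesses $h_\ell(v,U)\geq\deg_\ell(u)-\deg_\ell(v)>0$, so some $x_\ell^{k'}\in NMP(v,U)$ with $k'\leq\deg_\ell(u)-\deg_\ell(v)\leq\deg_\ell(m)$ divides $m$, forcing $m\in NM(v,U)$. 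Hence $\deg_\ell(v)>\deg_\ell(u)$ at the top-most difference, proving $v>u$ in Lex. Consequently the sequence $u_0=u,u_1=v,u_2=v',\dots$ of divisors of $w$ produced by the iteration is strictly Lex-increasing within the finite set $U$, so it halts in at most $|U|$ steps, yielding the sought Janet-like divisor of $w$.
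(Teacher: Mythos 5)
The paper states Proposition \ref{JLContinue} without proof (it is imported from Gerdt and Blinkov's papers on Janet-like division), so your argument has to stand on its own --- and it does. The forward direction is the trivial unpacking of Definition \ref{JLcomplete}. In the backward direction the load-bearing step is your Lex-monotonicity lemma, and its proof is correct: writing $up=vm$ with $m\in M(v,U)$, the case $v=u$ is excluded since $m=p\in NM(u,U)$; otherwise, at the top-most index $\ell$ where $u$ and $v$ differ, the assumption $\deg_\ell(v)<\deg_\ell(u)$ (which, note, is automatic unless $\ell=i$, because $v\mid up$ forces $\deg_j(v)\le\deg_j(u)$ for $j\ne i$) makes $u$ itself a witness that $h_\ell(v,U)>0$, producing $x_\ell^{k'}\in NMP(v,U)$ with $k'\le\deg_\ell(u)-\deg_\ell(v)\le\deg_\ell(m)$ and hence $m\in NM(v,U)$, a contradiction. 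This lemma is exactly the combinatorial content of condition 2 of Theorem \ref{JLCompleteBC} (the Janet-like divisor of $tp$ sits over the \emph{next} $i$-bar, hence is Lex-greater), so your argument dovetails with the Bar Code picture even though you never invoke it. One simplification worth noting: the strong induction on $\deg(w)$ is superfluous. The right-hand side of the equivalence applies to \emph{every} prolongation $u_jp_j$ with $p_j\in NMP(u_j,U)$, not only to those of degree smaller than $\deg(w)$; so from $w=u_jt_j$ with $t_j=p_js_j$ you may directly take $u_jp_j=u_{j+1}m_{j+1}$ with $m_{j+1}\in M(u_{j+1},U)$, rewrite $w=u_{j+1}(m_{j+1}s_j)$, and iterate, with no case split on $s_j=1$. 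Your lemma makes the chain $u_0<_{Lex}u_1<_{Lex}\cdots$ strictly increasing inside the finite set $U$, so the iteration must halt at some $t_j\in M(u_j,U)$, which is all that is needed. Keeping the degree induction does no harm, but it obscures the fact that termination rests entirely on the Lex-chain argument.
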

 Bar Codes can help us to detect completeness of a finite set of terms, as it is shown in the theorem below.
\begin{theorem}\label{JLCompleteBC}
  Let $U\subset \kT$ be a finite set of terms,
  $\cB$ its Bar Code, $t\in U$, $p=x_i^{k_i}\in NMP(t,U)$ a nonmultiplicative power and $\cB^{(i)}_j$ the $i$-bar under $t$. Let $s \in U$; $s\mid tp$ w.r.t.   Janet-like division if and only if the following conditions hold:
\begin{enumerate}
   \item $s \mid pt$
   \item $s$ lies over  $\cB^{(i)}_{j+1}$ and 
   \item $\forall j'$ such that $x_{j'}\mid \frac{pt}{s}$ either there is a star after the 
$j'$-bar under $s$ or the nonmultiplicative power w.r.t. $x_{j'}$ has greater degree $deg_{j’}(\frac{pt}{s})$.
  \end{enumerate}
\end{theorem}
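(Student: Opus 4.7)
The plan is to decouple the statement into two parts: first show that conditions (1) and (3) together are equivalent to $s\mid_J tp$, and then show separately that $s\mid_J tp$ forces condition (2), which closes the iff.

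By Definition \ref{JlikeDivisors}, $s\mid_J tp$ means (a) $s$ classically divides $tp$ and (b) the quotient $v:=tp/s$ lies in $M(s,U)$, i.e., no nonmultiplicative power of $s$ divides $v$. Part (a) is literally condition (1). For part (b), Proposition \ref{VMolt} identifies ``star after the $j'$-bar under $s$'' with ``$x_{j'}\in M_J(s,U)$'', i.e., no nonmultiplicative power of $s$ in $x_{j'}$ exists; otherwise, $x_{j'}^{k_{j'}(s,U)}$ exists and failing to divide $v$ amounts to $k_{j'}(s,U) > \deg_{j'}(v)$. Quantifying over $j'$ with $x_{j'}\mid v$ (those with $\deg_{j'}(v)=0$ being vacuous), this is exactly condition (3). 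Hence $(1)\wedge(3)\iff s\mid_J tp$.

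It remains to verify that $s\mid_J tp$ forces $s$ to lie over $\cB^{(i)}_{j+1}$. A geometric preliminary is needed: because $x_i$ is Janet-nonmultiplicative for $t$, Proposition \ref{VMolt} says no star separates $\cB^{(i)}_j$ from $\cB^{(i)}_{j+1}$, so both sit over a common $(i+1)$-bar; consequently any $t'$ over $\cB^{(i)}_{j+1}$ satisfies $\pi^{i+1}(t')=\pi^{i+1}(t)$ and, by Proposition \ref{VJLMolt}, $\deg_i(t')=\deg_i(t)+k_i$. In particular $\pi^i(tp)=\pi^i(t')$, so ``$s$ lies over $\cB^{(i)}_{j+1}$'' translates to $\deg_{j'}(s)=\deg_{j'}(tp)$ for every $j'\geq i$.

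Classical divisibility already gives $\deg_{j'}(s)\leq\deg_{j'}(tp)$ always. Assume, for contradiction, that some $j'\geq i$ has strict inequality, and let $\iota$ be the largest such. The plan is to find a witness $w\in U$ forcing $x_\iota$ to be Janet-nonmultiplicative for $s$ in the sense of Definition \ref{JlikeDivision}, with the extra bound $k_\iota(s,U)\leq\deg_\iota(v)$; this yields $x_\iota^{k_\iota(s,U)}\mid v$, contradicting $v\in M(s,U)$. The witness is $t$ when $\iota>i$ and $t'$ when $\iota=i$: in both cases maximality of $\iota$ yields $\deg_{j'}(w)=\deg_{j'}(s)$ for all $j'>\iota$, while $\deg_\iota(w)>\deg_\iota(s)$ by construction, and $\deg_\iota(w)-\deg_\iota(s)=\deg_\iota(tp)-\deg_\iota(s)=\deg_\iota(v)$ forces $k_\iota(s,U)\leq\deg_\iota(v)$. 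The main subtlety lies in this case split and in applying the bound from Proposition \ref{VJLMolt} to the correct bar $\cB^{(i)}_{j+1}$; once that is in place, the remaining argument is routine bookkeeping with the degrees.
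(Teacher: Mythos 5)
Your proof is correct, but it is organized genuinely differently from the paper's. You first observe that conditions (1) and (3) are, via Proposition \ref{VMolt} and the identification of ``star after the $j'$-bar'' with ``no nonmultiplicative power of $s$ in $x_{j'}$'', nothing but the definition of Janet-like divisibility unpacked (variables with $\deg_{j'}(v)=0$ being vacuous since every nonmultiplicative power has positive degree), so the entire content of the theorem is reduced to showing that $s\mid_J tp$ forces condition (2). You then prove this by a single uniform argument: translate ``$s$ lies over $\cB^{(i)}_{j+1}$'' into $\deg_{j'}(s)=\deg_{j'}(tp)$ for all $j'\geq i$, take the largest index $\iota$ where equality fails, and exhibit an explicit witness ($t$ if $\iota>i$, a term $t'$ over $\cB^{(i)}_{j+1}$ if $\iota=i$) certifying that $x_\iota^{k_\iota(s,U)}\in NMP(s,U)$ divides $tp/s$ --- a contradiction. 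The paper instead proves both implications with all three conditions in play and establishes (2) by a three-way case analysis on the position of the $i$-bar under $s$ (over $\cB^{(i)}_j$, over $\cB^{(i)}_l$ with $l>j+1$, with $l<j$); your $\iota=i$ case corresponds to its $l=j$ case and your $\iota>i$ case to its $l<j$ case (where the paper also takes a maximal index $x_k$), while your formulation absorbs the $l>j+1$ case automatically since classical divisibility already forces $\pi^i(s)\leq_{Lex}\pi^i(tp)$. What your route buys is transparency about the logical role of each condition --- in particular that (2) is a consequence of (1) and (3) rather than an independent hypothesis for the ``if'' direction --- and a case-free argument; what the paper's route buys is a proof phrased directly in terms of the Bar Code picture. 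I checked the degree bookkeeping in both of your witness cases (including that the witness satisfies $\deg_{j'}(w)=\deg_{j'}(s)$ for all $j'>\iota$, as required by Definition \ref{JlikeDivision}) and it is sound.
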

\begin{proof}
  ``$\Leftarrow$'' It is an obvious consequence of proposition \ref{VJLMolt}; indeed, by 1. $s \mid pt$.  Thanks to (3),
   $\frac{pt}{s}$ is not divided by nonmultiplicative powers of any variable. Notice that $x_i\nmid w:=\frac{pt}{s}$, since  $s$ lies over  $\cB^{(i)}_{j+1}$,
so
 $\deg_i(s)= k_i+\deg_i(t)$ by the minimality of the nonmultiplicative power.
 
 So $sw=pt$ and $w$  does not contain nonmultiplicative powers  for $s$; therefore $s \mid  pt$ w.r.t. Janet-like division.
  \\
  ``$\Rightarrow$''  Let $s \in U$,  $s \mid pt$ w.r.t. Janet-like division; $s\mid pt$ by definition of Janet-like division.\\
  If $s$ would lie over  $\cB^{(i)}_j$, then $\deg_l(s)=\deg_l(t)$ for $l=i,...,n$, i.e. in $s$ and $t$ the variables $x_i,...,x_n$ appear with the same exponent. Then, being $s \mid pt$ and $\deg_i(s)=\deg_i(t)$ , $x_i^{k_i}\mid w:=\frac{pt}{s}$, so either  
$x_i$ is multiplicative for $s$, or the nonmultiplicative 
power of $x_i$ for $s$ is greater than $k_i$.
Both these alternative are impossible: if $x_i$ was multiplicative for $s$ then there would be a star after $\cB^{(i)}_j$, which is impossible by hypothesis, since $p=x_i^{k_i}$ is a nonmultiplicative power for $t$ and they lay over the same $i$-bar. It is also impossible that  the nonmultiplicative 
power of $x_i$ for $s$ is greater than $k_i$ since $\deg_l(s)=\deg_l(t)$ for $l=i,...,n$, and by the minimality of the nonmultiplicative power.
\\
  If $s$ would lie over  $\cB^{(i)}_l$, $l >j+1$, there exists $h\in \{i,...,n\}$ s.t. $\deg_h(s)>\deg_h(pt)$ (remember that the nonmultiplicative power is minimal), so $s \nmid pt$, which is again a contradiction.\\
  If $s$ would lie over  $\cB^{(i)}_l$, $l <j$, then $s<_{Lex} t$ and it cannot happen that $\deg_{l'}(s)=\deg_{l'}(t)$ for $l'=i,...,n$ (since otherwise $s$ would have been over $\cB^{(i)}_j$). Let 
$x_k:=\max\{x_h,\, h=1,...,n \vert \deg_h(s)<\deg_h(t)\}$; it is clear that $k \geq i$. Since $t \in U$ and $\deg_n(t)=\deg_n(s),...,$ $\deg_{k+1}(t)=\deg_{k+1}(s)$ and $\deg_k(t)>\deg_k(s)$,  $x_k$ cannot be a multiplicative variable for $s$. Now, let $x_k^{h_k}$ the nonmultiplicative power of $s$ w.r.t. the variable $x_k$. Being 
$\deg_n(t)=\deg_n(s),...,\deg_{k+1}(t)=\deg_{k+1}(s)$ and $\deg_k(t)>\deg_k(s)$, $h_k \leq \deg_k(t)-\deg_k(s)$, so 
$\deg_k(sx_k^{h_k})\leq \deg_k(t) \leq \deg_k(tp)$, and this is again a contradiction.
  \\Then $s$ must lie over $\cB^{(i)}_{j+1}$.\\
  For being $s \mid pt$, all the variables appearing with nonzero exponent in $\frac{pt}{s}$ must be multiplicative for $s$ or with exponent of non-multiplicative variables smaller than nonmultiplicative powers and this implies that (3) holds. 
\end{proof}
\begin{example}\label{EsGerdt}
Let us consider the set $U=\{x_1^5,x_2x_1^2,x_2^4x_1,x_3^2x_1^2,$ $ x_3^2x_2^2x_1,x_3^5\}\subset \kT$
of \cite{GB4} and suppose $x_1<x_2<x_3$.
 The associated Bar Code is displayed below:
 \begin{center}
\begin{tikzpicture}
 
\node at (4.7,0) [] {${*}$};
 \node at (5.7,0) [] {${*}$};
 \node at (6.7,0) [] {${*}$};
 \node at (7.7,0) [] {${*}$};
 \node at (8.7,0) [] {${*}$};
  \node at (9.7,0) [] {${*}$};

\node at (6.7,-0.5) [] {${*}$};
  \node at (8.7,-0.5) [] {${*}$};
  \node at (9.7,-0.5) [] {${*}$};
 
\node at (9.7,-1) [] {${*}$};

\node at (4.2,0.5) [] {${\small x_1^5}$};
\node at (5.2,0.5) [] {${\small x_2x_1^2}$};
\node at (6.2,0.5) [] {${\small x_2^4x_1}$};
\node at (7.2,0.5) [] {${\small x_3^2x_1^2}$};
\node at (8.2,0.5) [] {${\small x_3^2x_2^2x_1}$};
\node at (9.2,0.5) [] {${\small x_3^5}$};

\draw [thick] (4,0) --(4.5,0);
\draw [thick] (5,0) --(5.5,0);
\draw [thick] (6,0) --(6.5,0);
\draw [thick] (7,0) --(7.5,0);
\draw [thick] (8,0) --(8.5,0);
\draw [thick] (9,0) --(9.5,0);

\draw [thick] (4,-0.5) --(4.5,-0.5);
\draw [thick] (5,-0.5) --(5.5,-0.5);
\draw [thick] (6,-0.5) --(6.5,-0.5);
\draw [thick] (7,-0.5) --(7.5,-0.5);
\draw [thick] (8,-0.5) --(8.5,-0.5);
\draw [thick] (9,-0.5) --(9.5,-0.5);

\draw [thick] (4,-1) --(6.5,-1);
\draw [thick] (7,-1) --(8.5,-1);
\draw [thick] (9,-1) --(9.5,-1);
 
\node at (3.9,0) [] {${\scriptscriptstyle 1}$};
\node at (3.9,-0.5) [] {${\scriptscriptstyle 2}$};
\node at (3.9,-1) [] {${\scriptscriptstyle 3}$};
\end{tikzpicture}
\end{center}
Let us consider the elements in $U$ and identify their nonmultiplicative powers (in complete accordance with Table \ref{Table1}):
\begin{itemize}
  \item $x_1^5$: $x_1$ is multiplicative, the nonmultiplicative powers are $x_2,x_3^2$ since 
  $deg_2(x_2x_1^2)-deg_2(x_1^5)=1$ and  $deg_3(x_3^2x_1^2)-deg_3(x_1^5)= deg_3(x_3^2x_2^2x_1)-deg_3(x_1^5)=2$;
  \item $x_2x_1^2$:  $x_1$ is multiplicative, the nonmultiplicative powers are $x_2^3,x_3^2$ since 
  $deg_2(x_2^4x_1)-deg_2(x_2x_1^2)=3$ and  $deg_3(x_3^2x_1^2)-deg_3(x_2x_1^2)=  deg_3(x_3^2x_2^2x_1)-deg_3(x_2x_1^2)=2$;
  \item $x_2^4x_1$: $x_1,x_2$ are multiplicative, the nonmultiplicative power is $x_3^2$ since 
    $deg_3(x_3^2x_1^2)-deg_3(x_2^4x_1)=  deg_3(x_3^2x_2^2x_1)-deg_3(x_2^4x_1)=2$;
  \item $x_3^2x_1^2$:$x_1$ is multiplicative, the nonmultiplicative powers are $x_3^3,x_2^2$ since 
  $deg_2(x_3^2x_2^2x_1)-deg_2(x_3^2x_1^2)=2$ and  $deg_3(x_3^5)-deg_3(x_3^2x_1^2)=3$;
    \item $x_3^2x_2^2x_1$: $x_1,x_2$ are multiplicative, the nonmultiplicative power is $x_3^3$ since 
    $deg_3(x_3^5)-deg_3(x_3^2x_2^2x_1)=3$;
  \item $x_3^5$: all variables are multiplicative.
\end{itemize}
Now we show that $U$ is a complete set, by multiplying any of its terms by its nonmultiplicative powers and showing that the conditions of Theorem \ref{JLCompleteBC} hold
\begin{itemize}
  \item $x_1^5$: its nonmultiplicative powers are
  $x_2,x_3^2$, so we consider   $x_1^5x_2$ and $x_1^5x_3^2$:
  \begin{itemize}
  \item $x_1^5x_2=(x_1^2x_2)x_1^3$,  so the Janet-like divisor is $x_1^2x_2$;
  \item $x_1^5x_3^2=(x_3^2x_1^2)x_1^3$, so the Janet-like divisor is $x_3^2x_1^2$.
  \end{itemize}
  \item $x_2x_1^2$:    its nonmultiplicative powers are
  $x_2^3,x_3^2$, so we consider   $x_2^4x_1^2$ and $x_3^2x_2x_1^2$:
  \begin{itemize}
  \item $x_2^4x_1^2=(x_2^4x_1)x_1 $,  so the Janet-like divisor is $  x_2^4x_1 $;
  \item $x_3^2x_2x_1^2=(x_3^2x_1^2)x_2 $, so the Janet-like divisor is $x_3^2x_1^2$ (note that, in this case, $x_2$ is not Janet-multipicative for $x_3^2x_1^2$, but the nonmultiplicative power is $x_2^2$, so $x_2 \in M(x_3^2x_1^2,U)$).
  \end{itemize}
  
  \item $x_2^4x_1$:   its nonmultiplicative power is  $x_3^2$, so we have $x_3^2x_2^4x_1=(x_3^2x_2^2x_1)x_2^4$, thus the Janet-like divisor is  $x_3^2x_2^2x_1$;
  \item $x_3^2x_1^2$: its nonmultiplicative powers are  $x_3^3,x_2^2$, so we consider $x_3^5x_1^2 $ and $x_3^2x_2^2x_1^2$:
    \begin{itemize}
  \item $x_3^5x_1^2= (x_3^5)x_1^2 $,  so the Janet-like divisor is $  x_3^5  $;
  \item $x_3^2x_2^2x_1^2=(x_1x_2^2x_3^2)x_1  $, so the Janet-like divisor is $ x_1x_2^2x_3^2$.
  \end{itemize}
    \item $x_3^2x_2^2x_1$: its nonmultiplicative power is  $x_3^3$ so we have  $x_3^5x_2^2x_1=(x_3^5)x_2^2x_1$  thus the Janet-like divisor is $x_3^5$.    
  \item $x_3^5$: all variables are multiplicative, so there is nothing to prove.
\end{itemize}
Note that, in complete accordance with Theorem 
\ref{JLCompleteBC}, for each $t \in U$, the Janet-like divisor with respect to a nonmultiplicative power $x_i^{k_i}$ lies over the subsequent $i$-bar. 
\end{example}

\section{An historical note}\label{CornerInfSec}
In this section, we set a connection between Janet-like multiplicative power and previous results on decomposition of ideals in irreducible primary components.
\\
The first result in this framework dates back to Macaulay \cite{Mac}, who gave an irreducible primary decomposition of a zerodimensional ideal  within a fixed coordinates' system.\\
Such a result has been generalized by Alonso, Marinari an Mora, who gave the definition of {\em infinite corner} \cite{Oracolo}:
\begin{quote}
 Let $I$ be an ideal of $\ck[x_1,...,x_n]$. If $I$ is zerodimensional, its {\em corner set} is defined as 
   ${\bf C}({\sf I}):=\{\tau\in {\bf N} ({\sf I}) \, : \forall \, 1\leq i \leq n, X_i\tau\in{\bf T}({\sf I}) \}\subset {\bf N}({\sf I})$. 
   In the non 0-dimensional case,  the corner set can be generalized \cite{Mac}considering also elements $\tau = x_1^{\alpha_1}\cdots x_n^{\alpha_n},  \alpha_i\in{\Bbb N}\cup\{\infty\}$ and setting 
$$\omega \mid \tau \iff \beta_i \leq \alpha_i \forall \omega= x_1^{\beta_1}\cdots x_n^{\beta_n}.$$

It is then easy to see that there is a finite set 
$${\bf C}^\infty({\sf I})\subset\{x_1^{\alpha_1}\cdots x_n^{\alpha_n}  \alpha_i\in{\Bbb N}\cup\{\infty\}\}$$
which satisfies
$$\omega\in{\bf N} ({\sf I}) \iff \exists \tau\in{\bf C}^\infty({\sf I}) : \omega\mid\tau.$$
\end{quote}
The ideas in \cite{Oracolo} can be interpreted in the language by Gerdt and Blinkov in the
sense   that nonmultiplicative power arise from infinite corners.
The idea behind this connection is to take a generating set $U=\{t_1,...,t_m\} \subset \kT$ for a monomial ideal $J$ and consider it ordered  {\em decreasing order with respect to Lex}, so $t_1>t_2>...>t>m$. First of all we consider the term $t_1$: all multiples of $t_1$ are in $J$ and all the variables are multiplicative for $t_1$ so we say that its infinite corner is $x^\infty y^\infty$.\\
Taken then $t_2$, we want to consider all the multiples of $t_2$ not divided by $t_1$. The infinite corner of $t_2$ with respect to $t_1$ gives the nonmultiplicative powers of $t_2$. In particular, the nonmultiplicative powers are the finite  exponents of the corresponding variables, while the infinite ones represent the multipicative variables. 
Continuing in this fashion with $t_3,...,t_m$, we get all the nonmultipicative powers.
As a simple example, if $U=\{y^3,xy,x^2\}\subset \ck[x,y]$, we have
that the corner of $xy$ with respect to $y^3$ is $x^\infty y^2$
and the corner of $x^2$ with respect to $\{y^3,xy\}$ is $x^\infty y$, as shown in the following picture
\begin{center}
 
  \begin{tikzpicture}[scale=0.8]
\draw 
(2,0) circle (4pt) node[align=right,  below] (x2) {$\scriptstyle{x^2}$}
(1,1) circle (4pt) node[align=right,  below] (xy) {$\scriptstyle{xy}$}
(1,3) circle (1pt) node[align=right,  below] (xy3) {}
(2,1) circle (1pt) node[align=right,  below] (x2y) {}
(9,9) circle (4pt) node[align=right,  below] {$\scriptstyle{x^\infty y^\infty}$}
(9,2.8) circle (4pt) node[align=right,  below] {$\scriptstyle{x^\infty y^2}$}
(9,0.8) circle (4pt) node[align=right,  below] {$\scriptstyle{x^\infty y}$}
(0,3) circle (4pt) node[align=left,   below] {$ \scriptstyle{y^3\quad}$};
\draw[-] (2,0)--(2,1);
\draw[-] (1,1)--(9,1);
\draw[-] (1,1)--(1,9);
\draw[-] (0,3)--(9,3);

\draw[decorate,decoration={brace,amplitude=5pt,mirror}]
(x2.north east) -- (x2y.north east) node [midway,xshift=0.4cm,right] {height 1};
\draw[decorate,decoration={brace,amplitude=5pt,mirror}]
(xy.north east) -- (xy3.north east) node [midway,xshift=0.4cm,right] {height 2};

\draw[->] (0,0)--(0,9);
\draw[->] (0,0)--(9,0);
\end{tikzpicture}\end{center}

\section{Perspectives: reduced Janet-like bases computation}\label{PerspectivesSec}
In this section we give an overview on how to compute 
the Janet-like reduced basis for a zerodimensional radical ideal
$I:=I(\mathbf{X}) \triangleleft \ck[x_1,...,x_n]$, 
given its (finite) variety $\mathbf{X}=\{P_1,...,P_N\}$, in a Groebner-free fashion, following what stated first in \cite{Mou,Lu} and
 explicitly expressed and 
sponsored in the book \cite[Vol.3,40.12,41.15]{SPES}. This approach aims to avoid the computation of a Groebner basis of a (0-dimensional) ideal 
$I\subset \ck[x_1,...,x_n]$ in favour of  combinatorial algorithms describing instead the structure of the quotient algebra $\ck[x_1,...,x_n]/I$.
\\
In the paper \cite{Lu}, Lundqvist proposes four methods to compute the normal form of a polynomial with respect to $I$, without passing through Groebner bases. In particular, we recall the following 
proposition
\begin{proposition}[\cite{Lu}]\label{LNf}
  Let $\mathbf{X}=\{P_1,...,P_N\}$ be a finite set of points, $I:=I(\mathbf{X})\triangleleft \ck[x_1,...,x_n]$ its ideal of points and $\cN=\{t_1,...,t_N\}\subset \ck[x_1,...,x_n]$ such that  $[\cN]=\{[t_1],...,[t_N]\}$ is a basis for $A:=\ck[x_1,...,x_n]/I$. 
  Then, for each $f \in \ck[x_1,...,x_n]$ we 
 have
 \emph{$$\Nf(f,\cN)=(t_1,...,t_N)(\cN[[\mathbf{X}]]^{-1})^t (f(P_1),...,f(P_N))^t,$$}
  where $\Nf(f,\cN)$ is the normal form of $f$ w.r.t. $\cN$ and $\cN[[\mathbf{X}]]$ is the matrix whose rows are the evaluations of the elements of $\cN$ at all the points.
\end{proposition}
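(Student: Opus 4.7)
The plan is to exploit the fact that for a zerodimensional radical ideal $I=I(\mathbf{X})$ with $\vert \mathbf{X}\vert=N$, the evaluation map
\[ \mathrm{ev}: A:=\ck[x_1,\ldots,x_n]/I \longrightarrow \ck^N, \qquad [g]\longmapsto (g(P_1),\ldots,g(P_N)), \]
is a $\ck$-linear isomorphism. This is the standard consequence of Hilbert's Nullstellensatz together with the Chinese Remainder Theorem: since $I$ is radical and the $P_j$ are distinct, $\mathrm{ev}$ is injective with kernel $I$, and a dimension count gives $\dim_\ck A=N$, so $\mathrm{ev}$ is surjective as well.

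Given this, I first write $\Nf(f,\cN)=\sum_{i=1}^N c_i t_i$ for unique coefficients $c_i\in\ck$, which is possible because $[\cN]$ is a basis of $A$ by hypothesis. Since $f\equiv \Nf(f,\cN)\pmod I$ and every element of $I$ vanishes at each $P_j$, evaluating both sides at each point yields the linear system
\[ f(P_j)=\sum_{i=1}^N c_i\, t_i(P_j), \qquad j=1,\ldots,N. \]
Setting $M:=\cN[[\mathbf{X}]]$ with entry $M_{ij}=t_i(P_j)$ (rows indexed by $\cN$, columns by $\mathbf{X}$), this reads $M^t\mathbf{c}=(f(P_1),\ldots,f(P_N))^t$, where $\mathbf{c}=(c_1,\ldots,c_N)^t$.

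The next step is to check that $M$ is invertible: its columns are precisely the images under $\mathrm{ev}$ of the basis elements $[t_i]$ of $A$, and $\mathrm{ev}$ is an isomorphism, so these images form a basis of $\ck^N$. Consequently I can solve the system as $\mathbf{c}=(M^t)^{-1}(f(P_1),\ldots,f(P_N))^t=(M^{-1})^t(f(P_1),\ldots,f(P_N))^t$ and conclude
\[ \Nf(f,\cN)=(t_1,\ldots,t_N)\,\mathbf{c}=(t_1,\ldots,t_N)\,(\cN[[\mathbf{X}]]^{-1})^t\,(f(P_1),\ldots,f(P_N))^t, \]
which is the claimed identity. The only real obstacle in the argument, and where the radicality hypothesis is essential, is the invertibility of $\cN[[\mathbf{X}]]$; without $I=\sqrt{I}$ the evaluation map would fail to be injective and the linear system would no longer determine $\mathbf{c}$ uniquely.
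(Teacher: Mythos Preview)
The paper does not supply its own proof of this proposition: it is quoted verbatim from Lundqvist \cite{Lu} and used only as a tool in Section~\ref{PerspectivesSec}. So there is no in-paper argument to compare against. Your argument is the standard one and is correct: express $\Nf(f,\cN)$ in the basis $[\cN]$, evaluate at the points to obtain a square linear system, and invert. One small slip: with your convention $M_{ij}=t_i(P_j)$, the images $\mathrm{ev}([t_i])=(t_i(P_1),\ldots,t_i(P_N))$ are the \emph{rows} of $M$, not its columns; this does not affect the conclusion that $M$ is invertible. Your remark about radicality is also right in substance, although note that the hypothesis $I=I(\mathbf{X})$ already forces $I$ to be radical, so this is implicit rather than an extra assumption.
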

If we want to compute a reduced Janet-like basis for 
$I$ given $\mathbf{X}$, we only need:
\begin{itemize}
  \item the points in $\mathbf{X}$;
  \item a basis $\cN$ for the quotient algebra $A:=\ck[x_1,...,x_n]/I$;
  \item a complete set $U$ of terms w.r.t. Janet-like division, which generates $\cT(I)$
\end{itemize}
so that the basis is the set $B=\{\Nf(t,\cN) :\, t \in U \}$.\\
A very simple basis for $A$ is the lexicographical Groebner escalier $\cN(\mathbf{X})$ of $I$ and it can be computed in a purely combinatorial way, without using Groebner bases (see \cite{CM,CeMu,CeMu2,CeMu3,FRR,Lu}). 
Once one has the escalier, it is a trivial task to find a generating set $U$ for $\cT(I)$.
\\
Finally, one can construct the Bar Code associated to $U$ and use 
Theorem \ref{JLCompleteBC} to update it dinamically 
by adding those terms of the form $tv$, $t\in U$, $v \in NMP(t,U)$ such 
that it has no Janet-like divisors in $U$. 
This way, we can get a completion of $U$ and a simple application of Proposition \ref{LNf} to the elements of the completion gives the desired basis, following the approach of \cite{CMV,CMV2}.

 \bibliographystyle{alpha}    
\bibliography{sample-bibliography.bib}   
\end{document}